\numberwithin{equation}{section}
\theoremstyle{plain} 
\newtheorem{theorem}{Theorem}[section]
\newtheorem{proposition}[theorem]{Proposition}
\theoremstyle{definition} 
\theoremstyle{definition} 
\newtheorem*{ex*}{Example}
\theoremstyle{remark} 
\theoremstyle{remark} 
\newtheorem{remark}[theorem]{Remark}
\newtheorem*{remark*}{Remark}
\numberwithin{equation}{section}
\def\subsubsubsection{\@startsection{subsubsubsection}{4}{\z@}{-3.25ex plus -1ex minus -.2ex}{1.5ex plus .2ex}{\normalsize}}
\newcommand{\beqa}{\begin{eqnarray}}
\newcommand{\eeqa}{\end{eqnarray}}
\newcommand{\bseq}{\begin{subequations}}
\newcommand{\eseq}{\end{subequations}}
\newcommand{\dd}{\partial}
\renewcommand{\dd}{{\,\operatorname{d}}}
\newcommand{\pv}{\operatorname{p{.}v{.}}}
\newcommand{\al}{\alpha}
\newcommand{\vka}{\varkappa}
\newcommand{\la}{\lambda}
\newcommand{\be}{\beta}
\newcommand{\De}{\Delta}
\renewcommand{\Psi}{\overline{\Phi}}
\newcommand{\ffrown}{\text{\raisebox{3pt}[0pt][0pt]{$\frown$}}}
\renewcommand{\O}{\underset{\ffrown}{<}}
\renewcommand{\u}{\mathsf{u}}
\renewcommand{\nu}{\mathsf{nu}}
\newcommand{\ii}{\operatorname{I}}
\renewcommand{\P}{\operatorname{\mathsf{P}}} 
\newcommand{\E}{\operatorname{\mathsf{E}}}
\newcommand{\R}{\mathbb{R}}
\newcommand{\C}{\mathbb{C}}
\newcommand{\CC}{\mathbb{C}}
\newcommand{\GG}{\mathfrak{G}}
\newcommand{\vp}{\varepsilon}
\renewcommand{\le}{\leqslant}
\renewcommand{\ge}{\geqslant}
\renewcommand{\cdot}{\#}
\newcommand{\fn}{f_{1n}}
\newcommand{\fnj}[1]{f_{1n}^{(#1)}}
\newcommand{\gn}{g_{1n}}
\newcommand{\gnj}[1]{g_{1n}^{(#1)}}
\newcommand{\hn}{h_{1n}}
\newcommand{\hnj}[1]{h_{1n}^{[#1]}}
\newcommand{\dnj}[1]{d_{1n}^{(#1)}}
\renewcommand{\Re}{\operatorname{\mathrm{Re}}}
\renewcommand{\Im}{\operatorname{\mathrm{Im}}}
\begin{document}

\begin{frontmatter}

\title{More on the nonuniform Berry--Esseen bound}
\runtitle{Nonuniform Berry--Esseen}

%

\begin{aug}
\author{\fnms{Iosif} \snm{Pinelis}\thanksref{t2}\ead[label=e1]{ipinelis@mtu.edu}}
  \thankstext{t2}{Supported by NSA grant H98230-12-1-0237}
\runauthor{Iosif Pinelis}


\address{Department of Mathematical Sciences\\
Michigan Technological University\\
Houghton, Michigan 49931, USA\\
E-mail: \printead[ipinelis@mtu.edu]{e1}}
\end{aug}

\begin{abstract}
Certain smoothing inequalities were proposed in the recent paper posted on arXiv at \url{http://arxiv.org/abs/1301.2828} in order to lessen the 
large gap between the best correctly established upper and lower bounds on the constant factor in the nonuniform Berry--Esseen bound. 
As an illustration of the possible uses of those inequalities, a quick proof of Nagaev's classical nonuniform bound was given there. 
Here we describe another, apparently more effective class of smoothing inequalities, and give a yet quicker and simpler proof of Nagaev's result. 
\end{abstract}

  
%

\setattribute{keyword}{AMS}{AMS 2010 subject classifications:}

\begin{keyword}[class=AMS]
\kwd
{60E15}
\kwd{62E17}
\end{keyword}


\begin{keyword}
\kwd{Berry--Esseen bounds}
\kwd{rate of convergence to normality}
\kwd{probability inequalities}
\kwd{smoothing inequalities}
\kwd{sums of independent random variables}
\end{keyword}

\end{frontmatter}

\settocdepth{chapter}

\tableofcontents 

\settocdepth{subsubsection}

\theoremstyle{plain} 
\numberwithin{equation}{section}


\section{Uniform and nonuniform Berry--Esseen (BE) bounds}\label{intro} 

Suppose that $X_1,\dots,X_n$ are independent zero-mean random variables (r.v.'s), with 
\begin{equation*}
	S:=X_1+\dots+X_n,\ A:=\sum\E|X_i|^3<\infty,\ \text{and}\  B:=\sqrt{\sum\E|X_i|^2}>0. 
\end{equation*}
Consider  
\begin{equation*}
	\De(z):=\textstyle{|\P(S>Bz)-\P(Z>z)|}\quad\text{and}\quad r_L:=A/B^3, 
\end{equation*}
where $Z\sim N(0,1)$ and $z\ge0$; of course, $r_L$ is the so-called Lyapunov ratio. 
Note that, in the ``iid'' case (when the $X_i$'s are iid), $r_L$ will be on the order of $1/\sqrt n$. 

In such an iid case, let us also assume that 
$\E X_1^2=1$. 

Uniform and nonuniform BE bounds are upper bounds on $\De(z)$ of the forms  
\begin{equation*}
	c_\u\,r_L\quad\text{and}\quad c_\nu\,\frac{r_L}{1+z^3}, 
\end{equation*}
respectively, 
for some absolute positive real constants $c_\u$ and $c_\nu$ and for all $z\ge0$. 

Apparently the best currently known upper bound on $c_\u$ (in the iid case) is due to Shevtsova \cite{shev11} and is given by the inequality 
$
	c_\u\le0.4748. 
$ 
On the other hand, Esseen \cite{esseen56} 
showed that $c_\u$ cannot be less than $\frac{3+\sqrt{10}}{6 \sqrt{2 \pi }}=0.4097\ldots$. 
\label{c_u gap} 

Thus, the optimal value of $c_\u$ is known to be within the rather small interval from $0.4097$ to $0.4748$ in the iid case \big(in the general case the best known upper bound on $c_\u$ appears to be $0.5600$, due to 
Shevtsova \cite{shevtsova-DAN};
a slightly worse upper bound, $0.5606$, is due to Tyurin \cite{tyurin}\big). 
So, the gap factor $0.4748/0.4097\ldots$ or even $0.5600/0.4097\ldots$ between the best known upper and lower bounds on (the least possible value of) $c_\u$ is now rather close to 1. 

As discussed in \cite{nonunif}, the situation is quite different for the absolute constant factor $c_\nu$ in the corresponding nonuniform BE bound. 
Namely, the best correctly established upper bound on $c_\nu$ in the iid case is 
over 25 times the corresponding best known lower bound, and this gap factor is greater than 
31 in the general case.  
Certain approaches were proposed in \cite{nonunif} to lessen this gap. 
Here we describe another, apparently more effective approach toward the same goal. 

\section{The Bohman--Prawitz--Vaaler smoothing inequalities}\label{praw}
Let us say that a function $F$ is a scaled distribution function (scaled d.f., for brevity) if $F=\la F_0$ for some real $\la\ge0$ and some d.f.\ $F_0$.  
To a significant extent the mentioned best known uniform BE bounds are based 
on the smoothing result due to Prawitz \cite[(1a, 1b)]{prawitz72_limits}, which can be stated as  follows. There exists a nonempty class of functions $M\colon\R\to\CC$ such that 
\begin{equation}\label{eq:M=0}
	M(t)=0\quad\text{if}\quad|t|>1 
\end{equation}
and 
for any scaled d.f.\ $F$, any real $T>0$, and any real $x$, 
\begin{gather}
	\tfrac12\,F(\infty-)+\GG\big(M_T(-\cdot)f_F(\cdot)\big)(x)
	\le  F(x-)\le F(x+)
	\le\tfrac12\,F(\infty-)+\GG\big(M_T(\cdot)f_F(\cdot)\big)(x), 
	\label{eq:praw}\\ 
\intertext{where $f_F$ denotes the Fourier-Stieltjes transform $\int_\R e^{ix\cdot}\dd F(x)$ of $F$,}\quad 
	M_T(\cdot):=M(\cdot/T), \label{eq:M_T} 
	\\ 
\GG(f)(x):=\frac i{2\pi}\,\pv\int_\infty^\infty e^{-itx}f(t)\frac{\dd t}t,  \label{eq:GG}
\end{gather}
and $\pv$ stands for ``principal value'', so that $\pv\int_{-\infty}^\infty:=\lim_{\vp\downarrow0\atop A\uparrow\infty}\big(\int_{-A}^{-\vp}+\int_\vp^A\big)$; here and subsequently, the symbol $\cdot$ stands for the argument of a function. 
In fact, this is a trivial restatement of Prawitz's result, which latter was presented for the case when $F$ itself is a d.f. 
Of course, the upper and lower bounds in \eqref{eq:praw} must take on only real values; this can be provided by the condition that 
\begin{equation}\label{eq:M1,M2}
	M_1:=\Re M\ \text{is even}\quad\text{and}\quad M_2:=\Im M\ \text{is odd.}
\end{equation}  
Note also that the upper and lower bounds in \eqref{eq:praw} easily follow from each other, by changing $X$ to $-X$. 
Functions $M$ satisfying conditions \eqref{eq:praw} and \eqref{eq:M=0} may be referred to as \emph{bounding smoothing filters}; accordingly, the corresponding inverse Fourier transforms $\check M(\cdot)=\frac1{2\pi}\int_\R e^{-it\cdot}M(t)\dd t$ may be referred to as \emph{bounding smoothing kernels}.  

One particular bounding smoothing filter $M$ 
was given by Prawitz \cite{prawitz72_limits} and can be defined by the formula 
\begin{equation}\label{eq:M special}
	M(t)=\big[(1-|t|)\,\pi t\cot\pi t+|t|
	-i(1-|t|)\,\pi t\big]\,\ii\{|t|<1\} 
\end{equation}
for all $t\ne0$; here and subsequently, it is tacitly assumed that the functions of interest are extended to $0$ by continuity. 

\begin{remark}\label{rem:M praw}
The derivative $M'$ of the Prawitz bounding smoothing filter $M$ as in \eqref{eq:M special} is a function of bounded variation. 
So, twice integrating $\int_\R e^{-it\cdot}M(t)\dd t$ by parts and using the Riemann--Lebesgue lemma, one can easily see that the corresponding bounding smoothing kernel $\check M$ 
is such that $x^2\check M(x)-\sin x\to0$ as $|x|\to\infty$ and hence $\int_\R|\check M(x)|\dd x<\infty$. 
Thus, Prawitz's particular $M$ is the Fourier transform of a function $\check M\in L^1(\R)$. 
%
\end{remark}

Earlier, inequalities of the form \eqref{eq:praw} were obtained by Bohman~\cite{bohman} for another class of functions $M$. 
Another approach to Prawitz's results was demonstrated by Vaaler~\cite{vaaler85}. 
For more on ways to construct functions $M$ satisfying the conditions \eqref{eq:praw}, \eqref{eq:M=0}, and \eqref{eq:M1,M2} and at that of any prescribed degree of smoothness, see \cite{nonunif}. 

It was also shown in \cite{nonunif} how one can use \eqref{eq:praw} to obtain smoothing inequalities, which work better in the tail zones, provided that the function $M$ is smooth enough. 
Here we shall present an alternative construction of such smoothing inequalities, taking into account possible large deviations. With this new approach, the two main differences (and hence potential advantages) are that we will (i) have fewer terms to bound and (ii) be able to make use of not so smooth bounding smoothing filters $M$; for instance, Prawitz's $M$ as in \eqref{eq:M special}, while optimal in a certain sense, is not smooth enough to be used in \cite{nonunif} --- but it can certainly be used within the framework of the method described in the next section.


\section{Another construction of smoothing inequalities for nonuniform BE bounds}\label{new constr}

Take any natural $k$ and any r.v.\ $X$ such that $\E|X|^k<\infty$, and introduce the functions $L_X$, $F_X$, $G_X$, $\hat F_X$, $\hat G_X$ defined by the formulas  
\begin{gather*}
	L_X(x):=x^k\big(\P(X>x)\ii\{x>0\}-\P(X<x)\ii\{x<0\}\big), \\ 
\begin{alignedat}{2}	
	F_X(x):=&\E X^k \ii\{X\le x\},&\quad G_X(x):=&\E(x_+\wedge X)^k, \\
	\hat F_X(t):=&\E X^k e^{itX}, &\quad 
	\hat G_X(t):=&
	\int_0^1k\al^{k-1}\E X^k e^{it\al X}\dd\al
\end{alignedat}
\end{gather*}
for all real $x$ and $t$; of course, the definition of the function $L_X$ makes sense even without the condition $\E|X|^k<\infty$. 
Here and subsequently, we employ the usual notation 
\begin{equation}
	x_+:=0\vee x\quad\text{and}\quad x_-:=0\wedge x=-(-x)_+.
\end{equation}


Note that 
\begin{equation}\label{eq:parity}
L_{-X}(\cdot)=(-1)^{k+1} L_X(-\cdot),\quad
	\hat F_{-X}(\cdot)=(-1)^k \hat F_X(-\cdot),\quad\text{and}\quad 
	\hat G_{-X}(\cdot)=(-1)^k \hat G_X(-\cdot).  
\end{equation}

For a moment, consider the particular case when the r.v.\ $X$ is nonnegative. Then 
\begin{enumerate}[(i)]
	\item $L_X=G_X-F_X$;  
	\item $F_X$ and $G_X$ are scaled d.f.'s, with $F_X(\infty-)=\E X^k=G_X(\infty-)$; also, $G_X$ is continuous on $\R$;
	\item $\hat F_X$ and $\hat G_X$ are the Fourier-Stieltjes transforms of $F_X$ and $G_X$, respectively; 
\end{enumerate}
To check item (ii) on this list, use the dominated convergence. To verify item (iii) concerning $\hat G_X$ and $G_X$, note that $G_X(x)=\mu\big((-\infty,x]\big)$ for all real $x$ and $\int_\R e^{itx}\mu(\dd x)=\hat G_X(t)$ for all real $t$, where $\mu$ is the nonnegative measure defined by the condition 
$\int_\R h\dd\mu=\E\int_\R k z^{k-1}\ii\{0<z\le X\}h(z)\dd z$ for all bounded and/or nonnegative Borel functions $h\colon\R\to\R$; the relation between $\hat F_X$ and $F_X$ is only easier to check. 


Removing now the temporary assumption that the r.v.\ $X$ is nonnegative and recalling \eqref{eq:praw}, one has  
\begin{align}
& \GG\big(M_T(-\cdot)\hat G_{X_+}(\cdot)\big)(x)
	-\GG\big(M_T(\cdot)\hat F_{X_+}(\cdot)\big)(x) \notag \\ 
	\le & L_{X_+}(x+)	\le  L_{X_+}(x-) \label{eq:<L} \\ 
	\le & \GG\big(M_T(\cdot)\hat G_{X_+}(\cdot)\big)(x)
	-\GG\big(M_T(-\cdot)\hat F_{X_+}(\cdot)\big)(x)  \label{eq:L<} 
\end{align}
for all real $x$. 
Using these inequalities (with $-X$ and $-x$ in place of $X$ and $x$) together with the parity  properties \eqref{eq:parity} and $\GG\big(f(-\cdot)\big)(-x)=-\GG(f)(x)$, one obtains the ``negative'' counterpart of the upper bound in \eqref{eq:L<}:  
\begin{align}
	L_{X_-}(x+)&=(-1)^{k+1}L_{(-X)_+}\big((-x)-\big) 
	\notag \\ 
	&\le(-1)^{k+1}\Big[\GG\Big(M_T\big((-1)^{k+1}\cdot\big)\hat G_{(-X)_+}(\cdot)\Big)(-x)
	-\GG\Big(M_T\big((-1)^k\cdot\big)\hat F_{(-X)_+}(\cdot)\Big)(-x)\Big] 
	\notag \\ 
	&=\GG\Big(M_T\big((-1)^k\cdot\big)\hat G_{X_-}(\cdot)\Big)(x)
	-\GG\Big(M_T\big(-(-1)^k\cdot\big)\hat F_{X_-}(\cdot)\Big)(x). \label{eq:L<,-} 
\end{align}

To proceed further, suppose that for 
some real constant $\vka$
\begin{equation}\label{eq:M-1}
\text{the function $\big(M(\cdot)-\vka\big)/\cdot$ is in $L^1([-1,1])$}; 	
\end{equation}
this condition was assumed in \cite{prawitz72_limits} and will be satisfied in the applications, usually with $\vka=1$; it is even unclear whether the conditions \eqref{eq:praw}, \eqref{eq:M=0}, and \eqref{eq:M1,M2} can ever all hold without \eqref{eq:M-1}. 
Introducing the functions 
\begin{equation*}
	M_{j,T}(\cdot):=M_j(\cdot/T)
\end{equation*}
for $j\in\{1,2\}$ (cf.\ \eqref{eq:M_T} and \eqref{eq:M1,M2}) and using \eqref{eq:M-1} and the fact that $\big|\int_\vp^A\frac{\sin zu}u\dd u\big|$ is bounded uniformly over all real $z$ and all $\vp$ and $A$ such that $0<\vp<A$, one can easily show (cf.\ \cite[(5)]{prawitz72_limits}) that the limit $\GG(M_{j,T}f)(x)$ exists (and is) in $\C$ for any $j\in\{1,2\}$, any real $T>0$, any characteristic function (c.f.) $f$, and any real $x$. 
This allows one to recombine the terms in the upper bounds in \eqref{eq:L<} and \eqref{eq:L<,-} to see that for any real $x\ge0$ 
\begin{align}
	x^k\P(X\ge x)=L_{X_+}(x-)=&L_{X_+}(x-)+L_{X_-}(x+) \notag \\ 
	\le& \GG\big(M_T(\cdot)\hat G_{X_+}(\cdot)\big)(x)
	-\GG\big(M_T(-\cdot)\hat F_{X_+}(\cdot)\big)(x) \notag \\
	&+\GG\Big(M_T\big((-1)^k\cdot\big)\hat G_{X_-}(\cdot)\Big)(x)
	-\GG\Big(M_T\big(-(-1)^k\cdot\big)\hat F_{X_-}(\cdot)\Big)(x) \notag \\ 
	=&\GG\big(M_{1,T}\,[\hat G_{X_+}+\hat G_{X_-}-\hat F_{X_+}-\hat F_{X_-}]\big)(x) \notag \\ 
	&+i\,\GG\big(M_{2,T}\,\big[\hat G_{X_+}+(-1)^k\hat G_{X_-}+\hat F_{X_+}+(-1)^k\hat F_{X_-}\big]\big)(x) \notag \\ 
	=&\GG\big(M_{1,T}\E X^k(W_X-V_X)\big)(x)+i\,\GG\big(M_{2,T}\E|X|^k(W_X+V_X)\big)(x), 
		\label{eq:le GG(g,f)}
\end{align}
where 
\begin{equation}\label{eq:f_a,g_a}
	V_X(\cdot):=e^{iX\cdot}\quad\text{and}\quad
	W_X(\cdot):=\int_0^1V_X(\al\cdot)k\al^{k-1}\dd\al=\int_0^1e^{i\al X\cdot}k\al^{k-1}\dd\al;   
\end{equation}
here we also used the obvious identities $\hat F_{X_\pm}(\cdot)=\E X_\pm^k e^{iX\cdot}$, $\hat G_{X_\pm}(\cdot)=\int_0^1k\al^{k-1}\E X_\pm^k e^{i\al X\cdot}\dd\al$, $X_+^k+X_-^k
=X^k$, and $X_+^k+(-X_-)^k
=|X|^k$. 

Quite similarly to the upper bound on $x^k\P(X\ge x\}$ in \eqref{eq:le GG(g,f)}, one can derive the corresponding lower bound on $x^k\P(X>x\}$, with $-M_{2,T}$ in place of $M_{2,T}$. Thus, one obtains 

\begin{theorem}\label{th:}
Let $M$ be any function such that the condition \eqref{eq:praw} holds for all d.f.'s $F$, all real $x$, and all real $T>0$, as well as the conditions \eqref{eq:M=0}, \eqref{eq:M1,M2}, 
and \eqref{eq:M-1}. 
Then for all real $x\ge0$ and all r.v.'s $X$ such that $\E|X|^k<\infty$
\begin{equation}\label{eq:2sided}
	\big|x^k\P(X\ge x)
	-\GG\big(M_{1,T}\,\E X^k (W_X-V_X)\big)(x)\big|
	\le i\,\GG\big(M_{2,T}\,\E|X|^k (W_X+V_X)\big)(x),  
\end{equation}
where $W_X$ and $V_X$ are as in \eqref{eq:f_a,g_a}; 
inequality \eqref{eq:2sided} also holds with $\P(X>x)$ in place of $\P(X\ge x)$. 
\end{theorem}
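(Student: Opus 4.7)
The proof plan is to run exactly the calculation sketched in the paragraphs leading up to \eqref{eq:le GG(g,f)}, and then obtain the matching lower bound on $x^k\P(X\ge x)$ so that the two one-sided inequalities combine into the absolute-value form \eqref{eq:2sided}.

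First, I would apply the Bohman--Prawitz--Vaaler inequality \eqref{eq:praw} to the scaled d.f.'s $F_{X_+}$ and $G_{X_+}$; since $F_{X_+}(\infty-)=G_{X_+}(\infty-)=\E X_+^k$ and $L_{X_+}=G_{X_+}-F_{X_+}$ (items (i)--(iii) of the list preceding \eqref{eq:parity}), the constant $\tfrac12 F(\infty-)$ cancels when one subtracts the inequalities, yielding the two-sided bound \eqref{eq:<L}--\eqref{eq:L<} for $L_{X_+}$. Then I would apply the upper bound \eqref{eq:L<} with $-X$ in place of $X$ and $-x$ in place of $x$; using the parity relations \eqref{eq:parity} together with the elementary identity $\GG\bigl(f(-\cdot)\bigr)(-x)=-\GG(f)(x)$, this translates into the bound \eqref{eq:L<,-} on $L_{X_-}(x+)$.

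Second, since $x\ge0$ gives $L_{X_+}(x-)=x^k\P(X\ge x)$ and $L_{X_-}(x+)=0$, I can freely add the two upper bounds. Writing $M=M_1+iM_2$ with $M_1$ even and $M_2$ odd (by \eqref{eq:M1,M2}), I would regroup the four resulting $\GG$-terms: the $M_{1,T}$-parts combine into the single transform of $\hat G_{X_+}+\hat G_{X_-}-\hat F_{X_+}-\hat F_{X_-}$, which by the identities $X_+^k+X_-^k=X^k$ and by direct Fubini manipulations on the definitions of $\hat F_X$ and $\hat G_X$ equals $\E X^k(W_X-V_X)$; the $M_{2,T}$-parts, where the $(-1)^k$ factors coming from \eqref{eq:L<,-} cooperate with the change of parity in $M_2$, combine into $i\,\E|X|^k(W_X+V_X)$ via $X_+^k+(-X_-)^k=|X|^k$. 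At this point I will also have to check that each of the four individual $\GG$-terms is well-defined in $\CC$; this is where hypothesis \eqref{eq:M-1} enters, together with the uniform boundedness of $\bigl|\int_\vp^A \frac{\sin zu}{u}\,du\bigr|$ over $z\in\R$ and $0<\vp<A$, exactly as used in the paragraph containing that display.

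Finally, running the same computation starting from the lower bound in \eqref{eq:praw} rather than the upper one, the contribution of the odd function $M_2$ flips sign while the $M_1$-part is unchanged. This gives the companion lower bound on $x^k\P(X\ge x)$ which is identical to the upper bound with $M_{2,T}$ replaced by $-M_{2,T}$; together the two inequalities are precisely \eqref{eq:2sided}. The variant with $\P(X>x)$ is obtained by replacing $L_{X_+}(x-)$ with $L_{X_+}(x+)$ throughout; nothing else in the argument changes. I expect the only real obstacle to be the sign bookkeeping in the $(-1)^k$ factors of \eqref{eq:L<,-} and the verification that the two algebraic combinations of $\hat F_{X_\pm}$, $\hat G_{X_\pm}$ collapse to the advertised expressions $\E X^k(W_X-V_X)$ and $i\,\E|X|^k(W_X+V_X)$ uniformly in the parity of $k$.
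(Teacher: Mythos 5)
Your proposal reproduces the paper's own derivation step by step: apply \eqref{eq:praw} to the scaled d.f.'s $F_{X_+}$ and $G_{X_+}$ (the constant $\tfrac12\E X_+^k$ cancelling since $F_{X_+}(\infty-)=G_{X_+}(\infty-)$), transfer to the $X_-$ contribution via the parity relations \eqref{eq:parity}, add the two one-sided bounds using that $L_{X_-}(x\pm)=0$ for $x\ge0$, regroup with $M=M_1+iM_2$ to obtain \eqref{eq:le GG(g,f)}, and then flip the sign of $M_2$ for the companion lower bound. This is exactly the argument leading to Theorem~\ref{th:} in the text, including the role of \eqref{eq:M-1} in justifying the splitting of the $\GG$-terms, so the proof is correct and takes the same route as the paper.
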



\begin{remark}\label{rem:|X|} 
Introducing the c.f.\ of $X$,
\begin{equation*}
	f(\cdot):=\E e^{i X\cdot},   
\end{equation*}
and its $k$th derivative $f^{(k)}$, 
one has
$\E X^k V_X=i^{-k}f^{(k)}$ and $(\E X^k W_X)(\cdot)=i^{-k}\int_0^1 f^{(k)}(\al\cdot)k\al^{k-1}\dd\al$, whence, concerning the left-hand side of \eqref{eq:2sided},   
\begin{equation}\label{eq:exprs} 
	\E X^k(W_X-V_X)(\cdot)=i^{-k}\int_0^1 [f^{(k)}(\al\cdot)-f^{(k)}(\cdot)]k\al^{k-1}\dd\al. 
\end{equation}
Somewhat unfortunately, when $k$ is odd the expression of the function $\E|X|^k (W_X+V_X)$ in the right-hand side of \eqref{eq:2sided} in terms of the c.f.\ $f$ (cf.\ e.g.\ \cite{positive}) is much less convenient than the expression for $\E X^k(W_X-V_X)$ in \eqref{eq:exprs} -- and the case most interesting in the applications is that of $k=3$. 
\end{remark}

However, there is a simple and apparently rather effective 
way to deal with this inconvenience: 


\begin{proposition}\label{prop:fix} 
Under the conditions of Theorem~\ref{th:}, 
\begin{multline}
\Big|\GG\big(M_{2,T}\,\E|X|^k (W_X+V_X)\big)(x)
-i^{-k}\int_0^1 k\al^{k-1}\,\GG\big(M_{2,T}(\cdot)\,[f^{(k)}(\al\cdot)+f^{(k)}(\cdot)]\big)(x)\dd\al\Big| \\
\le
\frac{c_{2,p}}\pi\,\frac{2k-p}{k-p}\,\E\frac{|X_-|^k}{(|X_-|+x)^p}\,\frac1{T^p}
\le
\frac{c_{2,p}}\pi\,\frac{2k-p}{k-p}\,
\Big(\E |X_-|^{k-p}\bigwedge\frac{\E |X_-|^k}{x^p}\Big)\frac1{T^p} \label{eq:fix}
\end{multline}
for all $p\in(0,k)$ and all real $x>0$, where 
\begin{equation*}
	c_{2,p}:=\sup_{u\in\R}|u|^p|\widehat{N_2}(u)|  
\end{equation*}
and $\widehat{N_2}$ is the Fourier transform of the function $N_2(\cdot):=M_2(\cdot)/\cdot$. 
\end{proposition}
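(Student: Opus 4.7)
The plan is to reduce the left-hand side of \eqref{eq:fix} to a single $\GG$-integral and then bound it using the scaling estimate on $\widehat{N_2}$. As a first step, using linearity of $\GG$, the identities $\E X^k V_X=i^{-k}f^{(k)}$ and $\E X^k W_X(\cdot)=i^{-k}\int_0^1 k\al^{k-1}f^{(k)}(\al\cdot)\dd\al$ from Remark~\ref{rem:|X|}, and the fact that $\int_0^1 k\al^{k-1}\dd\al=1$, I would recognize the identity
\begin{equation*}
i^{-k}\int_0^1 k\al^{k-1}\GG\big(M_{2,T}(\cdot)\,[f^{(k)}(\al\cdot)+f^{(k)}(\cdot)]\big)(x)\dd\al=\GG\big(M_{2,T}\,\E X^k(W_X+V_X)\big)(x).
\end{equation*}
Hence the LHS of \eqref{eq:fix} equals $\big|\GG\big(M_{2,T}\,\E(|X|^k-X^k)(W_X+V_X)\big)(x)\big|$. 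For even $k$ this vanishes since $|X|^k=X^k$, and there is nothing to prove. For odd $k$, splitting on the sign of $X$ gives $|X|^k-X^k=2|X_-|^k$ pointwise, so the task reduces to bounding $2\,\big|\GG\big(M_{2,T}\,\E|X_-|^k(W_X+V_X)\big)(x)\big|$.

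Next, I would translate this $\GG$-integral into a genuine (non-p.v.)\ Fourier integral. By \eqref{eq:M=0} and \eqref{eq:M-1} the function $N_2=M_2/\cdot$ lies in $L^1(\R)$ with support in $[-1,1]$; using the identity $M_2(t/T)/t=N_2(t/T)/T$ and substituting $u=t/T$, the principal value in $\GG$ collapses into an ordinary Lebesgue integral. Applying Fubini (justified by joint integrability of $|X_-|^k|N_2(u)|$) then yields
\begin{equation*}
\GG\big(M_{2,T}\,\E|X_-|^k(W_X+V_X)\big)(x)=\tfrac{i}{2\pi}\,\E|X_-|^k\Big(\int_0^1 k\al^{k-1}\widehat{N_2}\big(T(x-\al X)\big)\dd\al+\widehat{N_2}\big(T(x-X)\big)\Big).
\end{equation*}
On $\{X<0\}$ (the support of $|X_-|^k$) one has $X=-|X_-|$, so $|x-\al X|=x+\al|X_-|$ and $|x-X|=x+|X_-|$ for $x>0$, and the pointwise bound $|\widehat{N_2}(s)|\le c_{2,p}/|s|^p$ reduces the problem to controlling $\int_0^1 k\al^{k-1}/(x+\al y)^p\dd\al$ for $y=|X_-|\ge0$.

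The elementary inequality $x+\al y\ge\al(x+y)$ (valid for $\al\in[0,1]$ and $x\ge0$), together with $p\in(0,k)$, gives
\begin{equation*}
\int_0^1\frac{k\al^{k-1}}{(x+\al y)^p}\dd\al\le\frac{k}{(x+y)^p}\int_0^1\al^{k-1-p}\dd\al=\frac{k}{(k-p)(x+y)^p},
\end{equation*}
and combined with the direct contribution $1/(x+y)^p$ from the $\widehat{N_2}(T(x-X))$ term this produces the combined factor $\frac{k}{k-p}+1=\frac{2k-p}{k-p}$. The outer factor of $2$ absorbs $1/(2\pi)$ into $1/\pi$, yielding the first bound in \eqref{eq:fix}. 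The second bound follows at once from $(x+|X_-|)^p\ge x^p$ and $(x+|X_-|)^p\ge|X_-|^p$.

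The main obstacle I anticipate is the bookkeeping around the Fourier-transform step: making sure the p.v.\ really does collapse to a Lebesgue integral, that Fubini is applicable to the double/triple integral over $u$, $\al$, and the probability space, and that the scaling $1/T^p$ is correctly extracted via $|\widehat{N_2}(T(x-\al X))|\le c_{2,p}/(T|x-\al X|)^p$. Once these technicalities are handled, the rest is a short elementary calculation driven by the inequality $x+\al y\ge\al(x+y)$.
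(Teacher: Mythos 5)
Your proof is correct and follows essentially the same route as the paper's: reduce to $2\big|\GG\big(M_{2,T}\,\E|X_-|^k(W_X+V_X)\big)(x)\big|$, unwind $\GG$ into an honest $L^1$ Fourier integral via $N_2=M_2(\cdot)/\cdot$ supported in $[-1,1]$, apply Fubini, and bound the resulting $\widehat{N_2}$ values by $c_{2,p}|s|^{-p}$. One small but substantive difference is worth flagging: after the substitution $u=t/T$, the inner-integral exponent is $e^{iTu(\al X-x)}$, hence $\widehat{N_2}\big(T(\al X-x)\big)$, whose argument has modulus $T(\al|X_-|+x)$ on $\{X<0\}$ — not $\al T(|X_-|+x)$ as the paper's intermediate display (with $e^{i\al Tu(X-x)}$ and $\widehat{N_2}(\al T(X-x))$) would suggest. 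You correctly write $\widehat{N_2}\big(T(x-\al X)\big)$ and supply the elementary inequality $x+\al y\ge\al(x+y)$ for $\al\in[0,1]$, $x\ge 0$, which is exactly what is needed to recover the factor $\int_0^1 k\al^{k-1-p}\dd\al=\tfrac{k}{k-p}$; this step is implicit (and slightly garbled) in the paper. Your explicit remark that the left-hand side vanishes identically for even $k$ is also a nice touch not spelled out in the paper. The rest — the $1/\pi$ bookkeeping, the factor $\tfrac{2k-p}{k-p}$, the $1/T^p$ scaling, and the final $(x+|X_-|)^p\ge x^p\vee|X_-|^p$ step — matches the paper.
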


\newcommand{\lhs}{\text{LHS}}

\begin{proof}[Proof of Proposition~\ref{prop:fix}] 
Note that the left-hand side of the first inequality in \eqref{eq:fix} equals  
$\lhs:=\big|\GG\big(M_{2,T}\,\E|X|^k (W_X+V_X)\big)(x)
-\GG\big(M_{2,T}\,\E X^k (W_X+V_X)\big)(x)\big|
=2\big|\GG\big(M_{2,T}\,\E X_-^k (W_X+V_X)\big)(x)\big|$; cf.\ \eqref{eq:exprs}. 
Further, by \eqref{eq:GG}, 
\begin{align*}
%
\lhs&=\frac1{\pi}\,\Big|\int_\R e^{-itx}\frac{M_2(t/T)}t\,\E X_-^k 
\Big(\int_0^1k\al^{k-1}e^{i\al tX}\dd\al+e^{itX}\Big)\dd t\Big| \\ 
&=\frac1{\pi}\,\Big|\E X_-^k\,\int_\R N_2(u) 
\Big(\int_0^1k\al^{k-1}e^{i\al Tu(X-x)}\dd\al+e^{iTu(X-x)}\Big)\dd u\Big| \\ 
&=\frac1{\pi}\,\Big|\E |X_-|^k\, 
\Big(\int_0^1k\al^{k-1}\widehat{N_2}\big(\al T(X-x)\big)\dd\al+\widehat{N_2}\big(T(X-x)\big)\Big)\dd u\Big| \\ 
&\le\frac{c_{2,p}}{\pi}\,\E\frac{|X_-|^k}{T^p|X-x|^p}\, 
\Big(\int_0^1k\al^{k-1-p}\dd\al+1\Big)\\ 
&=\frac{c_{2,p}}{\pi}\,\E\frac{|X_-|^k}{(|X_-|+x)^p}\, 
\frac{2k-p}{k-p}\,\frac1{T^p}; 
\end{align*}
here we used the equality $|X-x|=|X_-|+x$, valid   
for any real 
$x>0$ on the event $\{X_-\ne0\}$. 
Thus, the first inequality in \eqref{eq:fix} is verified, and the second inequality there follows because $|X_-|+x\ge |X_-|\vee x$ for $x>0$. 
\end{proof}

\begin{remark}\label{rem:M2 praw}
For instance, for Prawitz's particular function $M$ as in \eqref{eq:M special}, 
$N_2(\cdot)=-\pi(1-|\cdot|)_+$, so that $\widehat{N_2}(x)=-\pi\big(\frac{\sin x/2}{x/2}\big)^2$ for real $x\ne0$, whence $c_{2,2}=4\pi$ and $c_{2,1}=4\pi\sup_{x>0}\frac{\sin^2 x/2}x$. 
It follows that for $k=3$ the second upper bound in \eqref{eq:fix} is 
$16\Big(\E |X_-|\bigwedge\frac{\E |X_-|^3}{x^2}\Big)\frac1{T^2}$ if $p$ is taken to be $2$, and it is no greater than 
$3.6231\Big(\E |X_-|^2\bigwedge\frac{\E |X_-|^3}x\Big)\frac1T$ with $p=1$. 
Here one can obviously further bound the moments of $|X_-|$ from above by the corresponding moments of $|X|$; these bounds can be obviously improved if the distribution of $X$ is symmetric. 
%
For simplicity, let us consider here the iid case and accordingly let $X:=S/\sqrt n$, so that $X$ is a zero-mean unit-variance r.v.  Then, with $k=3$ and $p=2$,  
\begin{equation}\label{eq:E rat1}
\E\frac{|X_-|^3}{(|X_-|+x)^2}
\le \E |X_-|\bigwedge\frac{\E |X_-|^3}{x^2}
\le 1\bigwedge\frac{\E |X|^3}{x^2}
\le1\bigwedge\frac{2+\be_3/\sqrt n}{x^2}  	
\end{equation}
by the Rosenthal-type inequality (see e.g.\ \cite[Lemma 6.3]{chen-shao05} or \cite[(12)]{pin12-2smooth}) 
\begin{equation}\label{eq:rosenthal}
	\E|X|^3\le2+\be_3/\sqrt n,   
\end{equation}
where $\be_3:=\E|X_1|^3$; this may be compared with $\E|Z|^3=2\sqrt{\frac2\pi}\approx1.6$, where $Z\sim N(0,1)$. 
In view of Markov's inequality and the mentioned value $0.4748$ of $c_\u$, \eqref{eq:rosenthal} also yields  
%
the 
classical result by Nagaev \cite{nagaev65}
\begin{equation}\label{eq:BE nonunif}
	|\P(S>x\sqrt n)-\P(Z>x)|\le c_\nu\frac{\be_3}{(1+x^3)\sqrt n}
\end{equation}
for all real $x\ge0$ with $c_\nu=4.5$ in the ``small $n$'' case when $\frac{\be_3}{\sqrt n}\ge\frac23$. So, if one recalls \cite{nonunif} that the apparently best known upper bound on $c_\nu$ in the iid case is over 
$25$ and thus considers 
the value $4.5$ for $c_\nu$ satisfactory at this point, then without loss of generality (w.l.o.g.) $\frac{\be_3}{\sqrt n}<\frac23$. 
Moreover, comparing the desired nonuniform bound $4.5\frac{\be_3}{(1+x^3)\sqrt n}$ with the known uniform bound $0.4748\frac{\be_3}{\sqrt n}$, one sees that w.l.o.g.\ $x>x_0:=(\frac{4.5}{0.4748}-1)^{1/3}=2.039\dots$. 

Another upper bound on the term $\E\frac{|X_-|^k}{(|X_-|+x)^p}$, which is apparently better than the upper bound in \eqref{eq:E rat1}, is as follows. Again, let us consider the case of principal interest, when $k=3$. At that, to be specific, take $p=2$. 
Consider the function $h(\cdot):=h_x(\cdot):=\frac{|\cdot_-|^3}{(|\cdot_-|+x)^2}$, for any given real $x>0$. Then it is easy to see that $|h'''(u)|\le\frac6{x^2}$ 
for all real nonzero $u$. 
Hence, by Tyurin's result \cite[Theorem~2]{tyurinSPL}, 
\begin{equation}\label{eq:E rat2}
\E\frac{|X_-|^3}{(|X_-|+x)^2}\le\frac{\psi(x)+\be_3/\sqrt n}{x^2}, 	
\end{equation}
where $\psi(x):=x^2\E\frac{|Z_-|^3}{(|Z_-|+x)^2}$, 
so that the function $\psi$ is increasing on the interval $(0,\infty)$, from $\psi(0+)=0$ to $\psi(\infty-)=\E|Z_-|^3=\sqrt{\frac2\pi}=0.797\ldots<0.8$. 
%
Thus, the upper bound in \eqref{eq:E rat2} is less than $\frac{0.8+2/3}{x_0^2}<0.36<1$ and hence indeed significantly less than the upper bound in \eqref{eq:E rat1}. 
Note also that the increase of $\psi$ is rather slow; in particular, $\psi(3.5)\approx0.35$, whereas certain considerations show that the most ``difficult'' values of $x$ are between $x_0\approx2$ and about $3.5$. 

One can also try to use the more accurate upper bound 
\begin{equation}
\frac1{\al^pT^p||X_-|+x|^p}\,
	\sup\{u^p|\widehat{N_2}(u)|\colon u\ge\al T x\}
\end{equation}
on $\widehat{N_2}\big(\al T(|X_-|+x)\big)$ -- instead of the bound $\frac1{\al^pT^p||X_-|+x|^p}\,\sup\{|u|^p|\widehat{N_2}(u)|\colon u\in\R\}$, essentially used in the proof of Proposition~\ref{prop:fix}.  
%
At that, one may want to utilize a function $M$ with its imaginary part $M_2$ smoother than that of the Prawitz particular function, so that the Fourier transform $\widehat{N_2}$ of the function $N_2(\cdot):=M_2(\cdot)/\cdot$ be decreasing faster. 
\end{remark}  
 

In \cite{nonunif}, a quick proof of \eqref{eq:BE nonunif} was given. Using Theorem~\ref{th:} in the present paper (with $k=3$), we can now give the following, yet quicker proof of \eqref{eq:BE nonunif}, in which we have fewer terms to bound than in the ``quick proof'' in \cite{nonunif}.

\paragraph{}
\label{quick-proof}
\emph{A quicker proof of Nagaev's nonuniform BE bound \eqref{eq:BE nonunif}}. 
Let $T=c_T\sqrt{n}/\be_3$, where 
$c_T$ is a small enough positive real constant. Let $A\O B$ mean $|A|\le CB$ for some absolute constant $C$. 
Let $X:=S/\sqrt n$. 
If $T\le1$ then $1\O\frac{\be_3}{\sqrt n}$. So, for all real $x\ge0$, by the Markov and Rosenthal inequalities, 
$(1+x^3)\P(X\ge x)\le1+\E|X|^3\O 1+\frac{\be_3}{\sqrt n}\O\frac{\be_3}{\sqrt n}$ and similarly 
$(1+x^3)\P(Z\ge x)\O\frac{\be_3}{\sqrt n}$, whence \eqref{eq:BE nonunif} follows. 

It remains to consider the case $T>1$. 
Note that then $n>(\be_3/c_T)^2\ge3$ and hence $n\ge4$ provided that $c_T\le1/\sqrt3$. 

In view of the uniform BE bound, Theorem~\ref{th:} (with $M$ as in \eqref{eq:M special}, say), \eqref{eq:exprs}, Proposition~\ref{prop:fix}, and Remark~\ref{rem:M2 praw}, in order to prove \eqref{eq:BE nonunif} 
it is enough to show that 
$\GG_{1\al}(f'''-g''')\O\frac{\be_3}{\sqrt n}$ and $\GG_{2\al}(f''')\O\frac{\be_3}{\sqrt n}$ for $\al\in(0,1]$, 
where 
$f$ is the c.f.\ of $X:=S/\sqrt n$, $g(\cdot):=e^{-\cdot^2/2}$, and  
\begin{equation*}
	\GG_{j\al}(h)(x):=
\GG\big(M_j(\tfrac{\al\cdot}T)h(\al\cdot)\big)(x).
\end{equation*}
 
For $j\in\{0,1,2,3\}$, introduce $f_1^{(j)}(t):=
\big(\frac{\dd}{\dd t}\big)^j f_1(t)$ and $\fnj j(t):=f_1^{(j)}(t/\sqrt n)$, where $f_1$ denotes the c.f.\ of $X_1$. 
Similarly, starting with $g_1:=g$ in place of $f_1$, define $\gnj j$, and then let 
$\dnj j:=\fnj j-\gnj j$ and $\hnj j:=\big|\fnj j\big|\vee\big|\gnj j\big|$; omit superscripts ${}^{(0)}$ and ${}^{[0]}$. 
Note that $f=\fn^n$ and hence $\sqrt{n}f'''=f_{31}+f_{32}+f_{33}$, where 
$f_{31}:=(n-1)(n-2)\fn^{n-3}\big(\fnj1\big)^3$, $f_{32}:=3(n-1)\fn^{n-2}\fnj1\fnj2$, and $f_{33}:=\fn^{n-1}\fnj3$; do similarly with $g$ and $g_1$ in place of $f$ and $f_1$. 
By Remark~\ref{rem:M praw} and \cite[Proposition~4.3]{nonunif}, $M_j(\tfrac{\al\cdot}T)f_{33}/\be_3$ is a quasi-c.f.\ and hence, by \cite[Proposition~4.2]{nonunif}, $\GG_{j\al}(f_{33})\O\be_3$, for $j\in\{1,2\}$. 

So, it suffices to show that $\GG_{1\al}(f_{3k}-g_{3k})\O\be_3$ and $
\GG_{2\al}(f_{3k})\O\be_3$ for $k\in\{1,2
\}$.
%
This can be done in a straightforward manner using the following estimates for $j\in\{0,1,2,3\}$ and $|t|\le T$:\quad 
$M_1\O1$, $
M_2(\frac t{
T})\O
\frac{|t|}{
T}\O|t|\be_3/\sqrt n$, $\hn(t)^{n-j}\le e^{-ct^2}$ (where $c$ is a positive real number depending only on the choice of $c_T$), 
$\hnj1(t)\O|t|
/\sqrt n$, $\hnj2(t)\O1$, $|\dnj j(t)|\O\be_3(|t|/\sqrt n)^{3-j}$, and hence 
$\fn^{n-j}(t)-\gn^{n-j}(t)\O|t|^3 e^{-ct^2}\be_3/\sqrt n$; cf.\ e.g.\ \cite[Ch.\ V, Lemma~1]{pet75}. 
For instance, $|f_{31}-g_{31}|\O n^2(D_{311}+D_{312})$, 
where 
$D_{311}(t):=
\big(|\fn^{n-3}-\gn^{n-3}|\big(\hnj1\big)^3\big)(t)\O|t|^3 e^{-ct^2}\frac{\be_3}{\sqrt n}\big(\frac{|t|}{\sqrt n}\big)^3$ and 
$D_{312}(t):=\big(\hn^{n-3}\big(\hnj1\big)^2|\dnj1|\big)(t) 
\O e^{-ct^2}\,\big(\frac{|t|}{\sqrt n}\big)^2\,\be_3\big(\frac{|t|}{\sqrt n}\big)^2$, so that  
$\GG_{1\al}(f_{31}-g_{31})
\O 
\int_{-\infty}^\infty(t^6+t^4)e^{-ct^2}\be_3\,\frac{\dd t}{|t|}
\O\be_3$.  
\qed

\bibliographystyle{abbrv}


\bibliography{C:/Users/Iosif/Dropbox/mtu/bib_files/citations12.13.12}
\end{document}